\documentclass[12pt,a4paper]{amsart}
\usepackage{graphics}
\usepackage{epsfig}
\usepackage{graphicx}
\theoremstyle{plain}
\usepackage{amssymb}
\usepackage{color}
\advance\hoffset-20mm \advance\textwidth40mm

\newtheorem{lemma}{Lemma}
\newtheorem*{theo*}{Theorem}

\theoremstyle{definition}

\newtheorem*{definition*}{Definition}


%

%
\begin{document}
\sloppy
\title[Polynomial similarity of pairs of matrices]
{Polynomial similarity of pairs of matrices}
\author
{Vitaliy Bondarenko, Anatoliy  Petravchuk, Maryna Styopochkina}
\address{Institute of Mathematics, National Academy of Sciences of Ukraine,
Tereschenkivska street, 3, 01004 Kyiv, Ukraine}
\email{vitalij.bond@gmail.com}
\address{ Faculty of Mechanics and Mathematics,
Taras Shevchenko National University of Kyiv, 64, Volodymyrska street, 01033  Kyiv, Ukraine}
\email{ petravchuk@knu.ua, apetrav@gmail.com}

\address{Polissia National University,
	Staryi Boulevard, 7, 10008 Zhytomyr, Ukraine}
\email{stmar@ukr.net}

\date{\today}
\keywords{pairs of matrices, wild matrix problem,  polynomial equivalence}
\subjclass[2000]{15A21, 15A99, 16G60}

\begin{abstract}
Let $K$ be a field, $R=K[x, y]$ the polynomial ring and
$\mathcal{M}(K)$ the set of all pairs of square matrices of the same size over $K.$ Pairs $P_1=(A_1,B_1)$ and $P_2=(A_2,B_2)$ from $\mathcal{M}(K)$ are called
similar if $A_2=X^{-1}A_1X$ and
$B_2=X^{-1}B_1X$ for some invertible matrix $X$ over $K$. Denote by
$\mathcal{N}(K)$   the subset  of $\mathcal{M}(K)$, consisting of  all pairs of commuting nilpotent matrices. A pair $P$ will be called  {\it polynomially equivalent} to a pair
$\overline{P}=(\overline{A},  \overline{B})$  if $\overline{A}=f(A,B), \overline{B}=g(A ,B)$ for some polynomials $f, g\in K[x,y]$ satisfying the  next conditions: $f(0,0)=0, g(0,0)=0$ and $ {\rm det} J(f, g)(0, 0)\not =0,$ where $J(f, g)$ is the Jacobi matrix of polynomials $f(x, y)$ and $g(x, y).$ Further, pairs of matrices $P(A,B)$ and $\widetilde{P}(\widetilde{A}, \widetilde{B})$ from $\mathcal{N}(K)$ will be called {\it polynomially similar} if there exists a pair $\overline{P}(\overline{A}, \overline{B})$ from $\mathcal{N}(K)$ such that $P$, $\overline{P}$ are polynomially equivalent and $\overline{P}$,  $\widetilde{P}$ are similar. The main result of the paper: it is   proved that the problem of classifying pairs of matrices up to polynomial similarity is wild, i.e. it contains the classical unsolvable  problem of classifying pairs of matrices up to similarity.

 \end{abstract}
\maketitle


\section{Introduction}

\large

This paper is devoted to  study the  problem  of classifying  pairs of matrices over a field up to a generalized similarity.

Recall that a classification problem is called {\it{wild}} if it contains the problem of classifying pairs of $(n\times n)$-matrices up to simultaneous similarity
$$ (A, B)\longmapsto S^{-1}(A, B)S=(S^{-1}AS, \  S^{-1}BS)  $$
with an invertible matrix $S$.
Otherwise, when  indecomposble objects are  ``parameterized'' by several discrete and at most 
 one continuous parameters, the problem is called {\it tame}.
These concepts (including the terms themselves) were first  introduced by P. Donovan and M. R.  Freislich in  \cite{DonFreis72}.
 Formal defi\-nitions of these two classes were proposed by Yu.A.Drozd
 in  \cite{Drozd77, Drozd79}; in these papers he also proved his well-known theorems about tame and wild problems.
The first results  associated with wild problems  were obtained in
\cite{Krugljak63,
GelfPon69,
Brenner70,
Drozd72,
DonFreis73,
 Nazarova73,
Bondarenko76,
BondarenkoDrozd77}, etc.

Let $R$ be an associative (not necessarily commutative) ring with identity, and let $\mathcal{M}(R)$ be the set of all pairs of square matrices over $R$ of the same size.  If  a pair $(A, B)$ consists of block matrices it is always assumed
that  the partitions of both matrices $A$ and $B$
into  blocks are the same and the diagonal blocks are some  square matrices. An identity block is denoted by  $I$ regardless of its size, but in each specific case it will be easy to find out   whether or not different identity blocks are of the same size.
Let $\mathcal{N}(R)$ denote  the subset  of $\mathcal{M}(R)$ consisting of  all pairs of commuting nilpotent matrices.
We  study the  problem  of classifying  pairs of matrices
from $\mathcal{N}(R)$ in case when $R=K$ is a field, up to similarity of a special kind.

So, let $K$ be a field and
$P=(A, B)$, $\overline{P}=(\overline{A}, \overline{B})$ be two pairs
of matrices from the set $\mathcal{N}(K)$.
We  say that the pair $P$ is  {\it polynomially equivalent} to the pair
$\overline{P}$ and write $P\sim_{K[x,y]} \overline{P}$ if $\overline{A}=f(A,B), \overline{B}=g(A ,B)$
for some polynomials $f, g\in K[x,y]$ satisfying the  next conditions:
\begin{equation}\label{for 00}
f(0,0)=0, g(0,0)=0,
\end{equation}

\begin{equation}\label{for not equal 0}
\left|
\begin{array}{cc}
\frac{df}{dx}(0,0)&\frac{df}{dy}(0,0)\\[2mm]
\frac{dg}{dx}(0,0)&\frac{dg}{dy}(0,0)\\
\end{array}
\right|\ne 0.
\end{equation}

Pairs of matrices $P=(A, B)$ and $\widetilde{P}=(\widetilde{A}, \widetilde{B})$ from
$\mathcal{N}(K)$ will be called {\it polynomially similar},
if there exists a pair
$\overline{P}=(\overline{A}, \overline{B})\in \mathcal{N}(K)$
such that the pairs $P$ and $\overline{P}$ are polynomially equivalent,
and the pairs $\overline{P}$ and  $\widetilde{P}$ are similar.
It can be easily checked that the relation $\sim_{K[x,y]}$ is an equivalence relation on the set $\mathcal{N}(K)$. Indeed, let
$(A,B)\sim_{K[x,y]}  (\overline{A}, \overline{B}).$ Then the radical of the local subalgebra generated by the matrices $A,B$ and by the identity  can be generated by the matrices $A,B$  as well as by  the matrices $\overline{A}, \overline{B}$. The latter means that the relation $\sim_{K[x,y]}$ is symmetric.  The reflexivity and the transitivity of this binary relation are obvious. Since the usual similarity is also an equivalence relation on  $\mathcal{N}(K)$ we see that  the polynomial similarity is an equivalence relation on the set $\mathcal{N}(K)$.

The aim of this paper is to prove that the problem of classifying  pairs of matrices from $\mathcal{N}(K)$
 up to polynomial similarity is wild. Note that this result is
 a far-reaching generalization of the main theorem of
\cite{FKPS} which provides in other terms wildness of our problem
with linear polynomials $f(x,y)$ nnd $g(x,y)$.

\section{Formulation of  the main theorem}

 Let us explain the notion of wildness  more detailed (we use here the matrix language).  Let $F_2=K\langle X, Y\rangle $ be the free associative  non-commutative  algebra in two free generators $ X, Y$ over $K.$  The problem of classifying pairs from $\mathcal{N}(K)$ up to polynomial similarity  is wild if  there exists   a pair $P_0=P_0(X,Y)\in \mathcal{N}(F_2)$ of  matrices $A_0=A_0(X,Y), B_0=B_0(X,Y)$   such that the polynomial similarity of pairs of matrices $P_0(M_1, N_1)$ and $P_0(M_2, N_2)$ with
$$(M_1, N_1), (M_2, N_2)\in \mathcal{M} (K)$$
  implies the similarity of pairs $(M_1, N_1)$ and $(M_2, N_2).$
This definition (in the specific case and in the matrix language) is consistent with the general idea of wildness of classification problems \cite{Drozd77}.
We call such a pair $P_0=P_0(X,Y)\in \mathcal{N}(F_2)$  of matrices  {\it w-defining
relatively the polynomial similarity} or simply {\it w-defining}.

We restrict ourselves to quadratic  polynomials  $f(x, y), g(x, y)$ in the definition of polynomial similarity and
 consider  the problem of classifying  pairs $(A, B)$  of matrices up to polynomial similarity in the case when
$$A^2=0, B^3=0, AB^2=0. \eqno{(*)}$$
{\bf Main theorem.}\label{T1}
{\it The problem of classifying,
 up to polynomial similarity,   pairs of matrices satisfying the equations $(*)$ is wild.}

Note that  in the nearest case $A^2=0, B^2=0$ the problem is not wild  even without the condition of commutativity and under the usual similarity (see the paper \cite{Bondarenko75}).

\section{Proof of the main theorem.}

The  subset of  pairs of matrices $(A, B)$   from
$\mathcal{N}(K)$, satisfying  $(*)$, will be denoted by $\mathcal{N}_{23}(K)$.
It follows from the above considerations that in $\mathcal{N}_{23}(K)$ a pair of matrices
$\widetilde{P}=(\widetilde{A}, \widetilde{B})$ is polynomially similar to a pair of matrices $P=(A,B)\ne 0$ if and only if it is similar to a pair of matrices
$P_{f,g}:=(f(A,B), g(A,B))$  for some  polynomials
\begin{equation}\label{polynomial}
\left.
\begin{array}{l}
f(x,y)= \alpha x +  \alpha_1y^2 + \alpha_2 xy, \\[2mm]
g(x,y)= \gamma x +\beta y+  \beta_1y^2 + \beta_2 xy,
\end{array}
\right.
\end{equation}
where, according to the condition (\ref{for not equal 0}), $\alpha, \beta\ne 0.$ Note that the constant terms are equal to zero according to the condition (\ref{for 00}), and the coefficient at $y$ in $f (x,y)$ is equal to zero according to $[f(A,B)]^2=0$; the equalities
$[g(A,B)]^3=0$ and $f(A,B)[g(A,B)]^2=0$ are always met and do not give any restrictions on the coefficients.

Note that in the  definition of  wildness
it is sufficient to require that the pairs $(M_1,N_1)$ and $(M_2,N_2)$ run not through the set
$\mathcal{M}(K)$, but through
the subset $E_1(K)$ of all pairs of matrices, all eigenvalues of which are equal to the identity element of the field (see, for example \cite{Gantmacher}, Chapter VIII).
This follows from the fact that the subset $E_1(K)$ is wild:
the following  pair of matrices of  $\mathcal{M}({F}_2)$ can be taken in this case
as  w-defining:
$$P=\left(\begin{array}{ccc}
I &I & 0 \\
0 &I & I \\
0 &0 &  I\\
\end{array}\right),
\hspace{4mm}
Q=\left(\begin{array}{ccc}
I &X & 0 \\
0 &I & Y \\
0 &0 &  I\\
\end{array}\right)
$$
(this can be verified by simple calculations).

Further, we
consider in $\mathcal{N}({F}_2)$  a pair of matrices
$P_0(X, Y)=(A_0, B_0)$ of the form
$$
A_0:= \left(\begin{array}{ccc|c|ccc}
0&0&0&0&I&0&0 \\
0&0&0&0&0&I&0 \\
0&0&0&0&0&0&I \\ \hline
0&0&0&0&0&0&0 \\ \hline
0&0&0&0&0&0&0 \\
0&0&0&0&0&0&0 \\
0&0&0&0&0&0&0 \\
\end{array}
\right),
\hspace{3mm}
B_0=B_0(X,Y):=  \left(\begin{array}{ccc|c|ccc}
0&0&I&0&0&0&0 \\
0&0&0&0&I&0&0 \\
0&0&0&0&0&T&0 \\ \hline
0&0&0&0&0&W&0 \\ \hline
0&0&0&0&0&0&I \\
0&0&0&0&0&0&0 \\
0&0&0&0&0&0&0 \\
\end{array}
\right),
$$
where
$$
T=T(X,Y):=
\left(\begin{array}{cc}
0&X\\
I&Y \\
\end{array}
\right),
\hspace{3mm}
W:=
\left(\begin{array}{cc}
0&I\\
\end{array}
\right),
$$
(here $X=X\cdot I, Y=Y\cdot I   $ for appropriate identity blocks  $I$).
Horizontal and vertical lines in the matrices
$A_0, B_0$ (which divide them into large blocks) are performed for formal reasons and, in particular, for a better understanding  ideas of the proposed calculations.
 It is easy to see that the pair  $P_0(X, Y)=(A_0, B_0)$  belongs to
$\mathcal{N}_{23}({F}_2)$  (for any  matrices $T$ and $W$).
We prove that the pair
$P_0(X,Y)$ is a  $w$-defining pair of matrices  of
$\mathcal{N}_{23}({F}_2)$ with  respect to $E_1(K)$
(see above the definition of wildness).

We denote
 $$A_{0f}:=f(A_0, B_0),   \ B_{0g}:=g(A_0,B_0)$$
 and  $P_0(X,Y)_{f,g}:=(f(A_0, B_0), g(A_0, B_0)).$

\begin{lemma}\label{lemma-1}
The  pair of matrices  $P_0(X,Y)_{f,g}\in\mathcal{N}_{23}({F}_2)$  is similar to a  pair of matrices
$\widehat{P}_0(X,Y)=(A_0, \widehat{B}_0)\in\mathcal{N}_{23}({F}_2)$, where  $\widehat{B}_0$  is of the
form
$$
\widehat{B}_0=\widehat{B}_0(X,Y):=
 \left(\begin{array}{ccc|c|ccc}
0 & 0 &I& 0 & *  & *           & *\\
0 & 0 & 0        & 0 & \dfrac{\beta^2}{\alpha}  I & *  & * \\
0 & 0 & 0        & 0 & 0        & \begin{array}{c}\dfrac{\beta}{\alpha} T\\[3mm] \end{array}  & *\\[0mm]
\hline
0 & 0 & 0        & 0 & 0        & W         & 0 \\
\hline
0 & 0 & 0        & 0 & 0        & 0                &  I \\
0 & 0 & 0        & 0 & 0        & 0                & 0 \\
0 & 0 & 0        & 0 & 0        & 0                & 0 \\
\end{array}
\right)
$$
with some elements $*$
(the exact form of which will not be used below$)$, $T=T(X,Y)$ and $W$ remain unchanged.
\end{lemma}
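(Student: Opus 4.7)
The plan is to exhibit a concrete conjugating matrix $S$ realising the required similarity, in the factored form $S=D(I+U)$, where $D$ is a block-diagonal scaling and $U$ is a strictly block upper triangular correction supported on only four positions.

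First I would carry out the block multiplication of $A_0$ and $B_0$, which yields $B_0^2$ nonzero only at $[1,6]=T$ and $[2,7]=I$, and $A_0B_0$ nonzero only at $[1,7]=I$. Plugging into the formulas~(\ref{polynomial}) one sees that $A_{0f}$ has principal blocks $\alpha I$ at $[1,5],[2,6],[3,7]$ plus perturbations $\alpha_1 T,\alpha_2 I,\alpha_1 I$ at $[1,6],[1,7],[2,7]$, while $B_{0g}$ has principal blocks $\beta I,\beta I,\beta T,\beta W,\beta I$ at the five essential positions $[1,3],[2,5],[3,6],[4,6],[5,7]$, together with extra blocks at the six positions $[1,5],[1,6],[1,7],[2,6],[2,7],[3,7]$.

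Next, I would take $D=\mathrm{diag}(s_1 I,\ldots,s_7 I)$ and choose the ratios $s_j/s_i$ so that conjugation by $D$ turns the principal entries of $A_{0f}$ into $I$ and the five essential entries of $B_{0g}$ into the coefficients displayed in $\widehat{B}_0$ (for example, the $[3,6]$-entry becomes $(\beta/\alpha)T$, forcing $s_6/s_3=1/\alpha$). A direct check shows the resulting system of scaling constraints is consistent and solved, up to a global scalar, by $s_1=1$, $s_2=s_3=\beta^{-1}$, $s_4=s_5=\alpha^{-1}$, $s_6=s_7=(\alpha\beta)^{-1}$; the matrices $T$ and $W$ are preserved literally, since $D$ acts by scalars.

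Finally, the correction $U$ kills the remaining unwanted entries. I would give $U$ nonzero blocks only at $[5,6],[5,7],[6,7]$ (to cancel the three perturbations of $A':=D^{-1}A_{0f}D$), together with one additional block at $[4,5]$ (to cancel the block of $B':=D^{-1}B_{0g}D$ that would otherwise appear at $[4,7]$). The key geometric observation is that the row-support $\{4,5,6\}$ of $U$ is disjoint from the row-support $\{1,2,3\}$ of $A'$, so $UA'=0$ and hence $(I+U)^{-1}A'(I+U)=A'+A'U$; the three equations $A'U=A_0-A'$ then uniquely determine the blocks $U[5,6],U[5,7],U[6,7]$. Choosing in addition $U[4,5]=-(\alpha_1/\alpha)W$ and expanding $(I+U)^{-1}B'(I+U)$ as a finite sum (which it is, because $U^4=0$ and $UB'$ is supported only at $[4,7]$) cancels the $[4,7]$-contribution $W\cdot U[6,7]$ coming from $B'U$. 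The main obstacle is this bookkeeping step: one must check that every essential block of $\widehat{B}_0$ sits in a position whose row- and column-indices never simultaneously meet the support of $U$, so that those entries are left fixed by the conjugation while only the $*$-positions absorb the corrections. This succeeds because columns $1,\ldots,4$ of $A_{0f}$ and columns $1,2,4$ of $B_{0g}$ are empty, so the perturbations introduced by $U$ are confined to the block positions already marked $*$.
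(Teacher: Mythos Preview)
Your argument is correct and follows the same overall strategy as the paper: exhibit an explicit invertible matrix, factored as a product of a block-diagonal scaling and a unipotent upper-triangular correction, that conjugates $(A_{0f},B_{0g})$ to $(A_0,\widehat{B}_0)$. The computations you outline (the support of $B_0^2$ and $A_0B_0$, the consistency of the scaling ratios $s_j/s_i$, the vanishing $UA'=0$, and the cancellation at position $[4,7]$ via $U_{4,5}=-(\alpha_1/\alpha)W$) all check out.

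The difference from the paper lies in \emph{where} the unipotent part is placed. The paper first conjugates by $D=\mathrm{diag}(U,I,I)$ with $U$ equal to the upper-right $3\times 3$ block of $A_{0f}$ itself, sitting in block rows/columns $1,2,3$; since $A_{0f}$ has the shape $\left(\begin{smallmatrix}0&0&U\\0&0&0\\0&0&0\end{smallmatrix}\right)$, the identity $U^{-1}U=I$ reduces $A_{0f}$ to $A_0$ in one stroke, and a subsequent scalar diagonal $Z$ fixes the coefficients of $B$. Because this unipotent factor acts only on rows/columns $1,2,3$, it never interacts with row $4$ of $B_{0g}$, so no spill-over into $[4,7]$ is ever created. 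Your factorization instead puts the unipotent correction in rows/columns $4$--$7$; this forces the extra block $U_{4,5}$ to cancel the term $W\cdot U_{6,7}$ generated at $[4,7]$ by $B'U$, and requires the additional bookkeeping you describe. Both routes are valid; the paper's is marginally more economical, while yours is more algorithmic (normalize scalars first, then sweep out the remaining off-diagonal terms).
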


\begin{proof}
According to the definition of $A_{0f}$ and $B_{0g}$ and equalities
$$ f(x,y)= \alpha x +  \alpha_1y^2 + \alpha_2 xy, g(x,y)= \gamma x +\beta y+  \beta_1y^2 + \beta_2 xy  $$
 we have
$$A_{0f} = \left(\begin{array}{ccc|c|ccc}
0& 0   & 0  & 0 &   \alpha I&\alpha_1 T & \alpha_2 I  \\
0& 0   & 0  & 0 & 0&\alpha I&\alpha_1 I  \\
0& 0   & 0  & 0 &0&0&\alpha I\\
\hline
0&0&0&0& 0  & 0  & 0 \\
\hline
0&0&0&0 & 0 & 0  & 0 \\
0&0&0&0 &0  & 0  & 0 \\
0&0&0&0   &0& 0  & 0 \\
\end{array}\right),
\hspace{1mm}
B_{0g} =
  \left(\begin{array}{ccc|c|ccc}
0 & 0 & \beta I & 0 & \gamma I & \beta_1 T            & \beta_2 I \\
0 & 0 & 0        & 0 & \beta I & \gamma I & \beta_1 I \\
0 & 0 & 0        & 0 & 0        & \beta T         & \gamma I \\ \hline
0 & 0 & 0        & 0 & 0        & \beta W         & 0 \\ \hline
0 & 0 & 0        & 0 & 0        & 0                & \beta I \\
0 & 0 & 0        & 0 & 0        & 0                & 0 \\
0 & 0 & 0        & 0 & 0        & 0                & 0 \\
\end{array}
\right).
$$
Let us write $A_{0f}$ in the form
$$A_{0f} =  \left(\begin{array}{c|c|c}
0 & 0 & U \\
\hline
0 & 0 & 0 \\
\hline
0 & 0 & 0
\end{array}\right)$$
with  square  diagonal blocks    and
$$U =  \left(\begin{array}{ccc}
\alpha I   & \alpha_1 T          & \alpha_2 I \\
 0            & \alpha I      & \alpha_1 I \\
0             & 0                & \alpha E
\end{array}\right),
$$
and put
$$D= \left(\begin{array}{c|c|c}
U   & 0  & 0 \\
\hline
0   & I & 0 \\
\hline
0   & 0 &  I
\end{array}\right),$$
with such  a division into large  blocks, as in the matrices $A_0$ and $B_0$.
Since the inverse to the matrix $U$ is of the form
$$U^{-1} =  \left(\begin{array}{ccc}
\begin{array}{c}\dfrac{1}{\alpha} I\\[5mm]\end{array}
& \begin{array}{c}-\dfrac{\alpha_1}{\alpha^2} T\\[5mm]\end{array}
& \begin{array}{c}\dfrac{\alpha_1^2}{\alpha^3} T - \dfrac{\alpha_2}{\alpha^2} I\\[3mm]\end{array}  \\
\begin{array}{c} 0 \\[3mm]\end{array}                           & \begin{array}{c}\dfrac{1}{\alpha} I\\[3mm]\end{array}
 & \begin{array}{c} -\dfrac{\alpha_1}{\alpha^2} I\\[4mm]\end{array}\\
   0                            & 0                & \dfrac{1}{\alpha} I
\end{array}\right)$$

\vspace{-4mm}
we obviously have
$$D^{-1}A_{0f} D = A_0,$$

 $$D^{-1} B_{0g} D =
 \left(\begin{array}{ccc|c|ccc}
0 & 0 &\beta I& 0 & *  & *           & *\\
0 & 0 & 0        & 0 & \dfrac{\beta}{\alpha}  I & *  & * \\
0 & 0 & 0        & 0 & 0        & \dfrac{\beta}{\alpha} T  & * \\ \hline
0 & 0 & 0        & 0 & 0        &\beta W         & 0 \\ \hline
0 & 0 & 0        & 0 & 0        & 0                & \beta I \\
0 & 0 & 0        & 0 & 0        & 0                & 0 \\
0 & 0 & 0        & 0 & 0        & 0                & 0 \\
\end{array}
\right),
$$
(the elements denoted by $*$ are not used below,
 the block matrices $T=T(X,Y)$ and $W$ remain unchanged).

\vspace{3mm}

Now put
$$Z =  \left(\begin{array}{ccc|c|ccc}
 \beta I & 0 & 0 & 0 & 0 & 0 & 0 \\
0 & I & 0 & 0 & 0 & 0 & 0\\
0 & 0 & I & 0 & 0 & 0 & 0 \\  \hline
0 & 0 & 0 & \beta I & 0 & 0 & 0 \\  \hline
0 & 0 & 0 & 0 &  \beta I & 0 & 0 \\
0 & 0 & 0 & 0 & 0 & I & 0 \\
0 & 0 & 0 & 0 & 0 & 0 & I \\
\end{array}
\right)$$
and consider the matrices  $Z^{-1}A_{0f}Z$  and $Z^{-1}B_{0g}Z$.
The first of them remains  to be equal to $A_0$, and the second is equal to
  $\widehat{B}_0$,
specified in the condition of the lemma, which completes the proof.
\end{proof}

{ \textbf{The proof of  the main theorem.}}

As it was noted above we may require that the pairs  $(M_1,N_1)$ and $(M_2,N_2)$ run not through the set
$\mathcal{M}(K)$, but through its subset $E_1(K)$ of all pairs of matrices, whose  eigenvalues are equal to the identity element of the field (see above).  This class of pairs of matrices is wild as noted earlier.

  Let $(M_1,N_1), (M_1,N_2)\in E_1(K)$, and the pairs of matrices $P_0(M_1,N_1)$ and $P_0(M_2,N_2)$ be  polynomially similar
 (the matrices of the forms $T$ and $W$ are denoted for them by
 $T_1$, $W_1$ and $T_2$, $W_2$, respectively).
 Then by the definition of polynomial
similarity and  by Lemma \ref{lemma-1}
the pair $P_0(M_1,N_1)$, which consists of the matrices $A_0$ and $B_0=B_0(M_1,N_1)$, is similar to the pair $\widehat{P}(M_2,N_2) $ consisting of the matrices $A_0$ and $\widehat{B}=\widehat{B}(M_2,N_2)$.
Thus, there exists an invertible matrix $S$ such that the following equalities are fulfilled  :
\begin{equation}\label{eq-1}
A_0S=SA_0,
\end{equation}
\begin{equation}\label{eq-2}
\widehat{B_0}S=SB_0.
\end{equation}

  Denote by $B'_0$ and $\widehat{B}'_0$ the submatrices the of matrices
 $B_0$ and $\widehat{B}_0$ respectively, formed by the 1st, 2nd and 3rd horizontal and vertical stripes. Then these matrices and the matrix $A_0$
  are, up to permutations of rows and columns, direct sums of  $1\times 1$ and $2\times 2$ Jordan    cells with  zero eigenvalues. There are  well-known formulas   for finding solutions of the matrix equation $AX=XA$ with $A$ chosen in a Jordan normal form, see e.g.\cite{Gantmacher} (here $X$ is an arbitrary  matrix).  Using these formulas (or verifying directly) one can show that:

 $(a)$
equality (\ref{eq-1}) is equivalent to the equality
$$S =  \left(\begin{array}{c|c|c}
S_{11} &S_{12} & S_{13} \\
\hline
0         &S_{22} & S_{23} \\
\hline
0         & 0        & S_{11} \\
\end{array}\right)$$
for some matrices $S_{ij}$;

$(b)$ the equality $\widehat{B}'_0 S_{11}=S_{11}B'_0$
(relative to $S_{11}$ from $(a)$),
which follows from the  equality (\ref{eq-2}),
is equivalent to the equality
$$S_{11} =  \left(\begin{array}{ccc}
Y_{11} &Y_{12} & Y_{13} \\
0 &Y_{22} & Y_{23} \\
0 &0 &  Y_{11} \\
\end{array}\right)$$
for some matrices $Y_{ij}$.

Thus, the matrix $S$ is of the form

$$S =  \left(\begin{array}{ccc|c|ccc}
Y_{11} &Y_{12} & Y_{13} & Y_{14} &Y_{15} & Y_{16} & Y_{17} \\
0 &Y_{22} & Y_{23} & Y_{24} &Y_{25} & Y_{26} & Y_{27} \\
0 &0 &  Y_{11} & Y_{34} &Y_{35} & Y_{36} & Y_{37} \\ \hline
0 &0 & 0 & Y_{44} &Y_{45} & Y_{46} & Y_{47} \\ \hline
0 &0 &0 &0 &Y_{11} &Y_{12} & Y_{13} \\
0 &0 &0 &0 &0 &Y_{22} & Y_{23} \\
0 &0 &0 &0 &0 &0 & Y_{11}
\end{array}\right).$$

We continue to use (\ref{eq-2}) as the equality of block matrices.
By equating the matrix blocks standing in its left and right parts
at places (1,4), (2,5), (3,6) and (4,6), we obtain, respectively,
  the equalities

  $$Y_{34}=0, \
  \dfrac{\beta^2}{\alpha} Y_{11}= Y_{22}, \
  \dfrac{\beta}{\alpha} T_2Y_{22}=Y_{11}T_1+Y_{34}W_1$$
   and
  $W_2Y_{22}=Y_{44}W_1$,
 whence

 \begin{equation}\label{fe-1}
W_2Y_{22}=Y_{44}W_1,
\end{equation}
 \begin{equation}\label{fe-2}
\beta^3 T_2Y_{22}=\alpha^2Y_{22}T_1.
\end{equation}

It follows from the equality (\ref{fe-1}) that in the $2\times 2$ block matrix
$Y_{22}:=(Z_{ij})$ block $Z_{21}$ is zero.
Then, in expanded form, the equation (\ref{fe-2}) has the form
$$
\beta^3\left(\begin{array}{cc}
0&M_2\\
E&N_2 \\
\end{array}
\right)
\left(\begin{array}{cc}
Z_{11}&Z_{12}\\
0&Z_{22}\\
\end{array}
\right)=
\alpha^2\left(\begin{array}{cc}
Z_{11}&Z_{12}\\
0&Z_{22}\\
\end{array}
\right)
\left(\begin{array}{cc}
0&M_1\\
E&N_1 \\
\end{array}
\right)
$$
or, after matrix multiplication,
$$
\beta^3\left(\begin{array}{cc}
0&M_2Z_{22}\\
Z_{11}&Z_{12}+N_2Z_{22}\\
\end{array}
\right)=
\alpha^2\left(\begin{array}{cccc}
 Z_{12}&Z_{11}M_1+Z_{12}N_1\\
Z_{22}&Z_{22}N_1\\
\end{array}
\right).
$$
So we have the equalities
$$ Z_{12}=0,
\beta^3Z_{11}=\alpha^2Z_{22},$$
$$\beta^3M_2Z_{22}=\alpha^2Z_{11}M_1,
\beta^3N_2Z_{22}=\alpha^2Z_{22}N_1,$$ whence
$M_1=\dfrac{\beta^6}{\alpha^4}Z^{-1}_{22}M_2Z_{22}$ and
$N_1=\dfrac{\beta^3}{\alpha^2}Z^{-1}_{22}N_2Z_{22}$.

Since all the eigenvalues of the matrices $N_1$ and $N_2$
are equal to the identity element of the field $K$ we get
$\frac{\beta^3}{\alpha^2}=1$ and therefore
$$M_1=Z^{-1}_{22}M_2Z_{22},
N_1=Z^{-1}_{22}N_2Z_{22}.$$
Thus, the  pairs of matrices $(M_1,N_1)$ and $(M_2,N_2)$ are similar. The proof is complete.


%

\begin{thebibliography}{99}


\bibitem{Bondarenko75}
 V. M. Bondarenko,  Representations of dihedral groups over a field
of characteristic, {\rm 2}.
Mat. Sb. (N. S.)  {\bf 96} (138)  (1975), 63--74.


\bibitem{Bondarenko76}
V.M. Bondarenko,  The similarity of matrices over rings of residue classes, in: Mathematics collection, Naukova Dumka, Kiev, 1976, 275--277.  	


\bibitem{BondarenkoDrozd77}	
V. M. Bondarenko, Yu. A. Drozd,  The representation type of finite groups, Zap.
Naucn. Sem. Leningrad. Otdel. Mat. Inst. Steklov (LOMI), {\bf 71} (1977), 24--41.

\bibitem{Brenner70}
3. S. Brenner,  Modular representations of p groups. J. Algebra.
{\bf 15} (1970), 89--102.

\bibitem{DonFreis72}
P. Donovan, M. R. Freislich,  Some evidence for an extension of the Brauer--Thrall conjecture, Sonderforschungsbereich
Theor. Math. {\bf 40} (1972), 24--26.

\bibitem{DonFreis73}
 P. Donovan, M. R. Freislich,  The representation theory of finite graphs and associated
algebras, Carleton University, Ottawa, Ont., 1973. Carleton Mathematical Lecture Notes, No. 5.

\bibitem{Drozd72}
Yu. A. Drozd,  Representations of commutative algebras, Funkcional. Anal. i Prilozen, {\bf 6}(4) (1972), 41--43.

\bibitem{Drozd77}
Yu. A. Drozd,  On tame and wild matrix problems, In: Matrix problems,  Akad. Nauk Ukrain. SSR, Inst. Mat., Kiev, 1977, 104--114.


\bibitem{Drozd79} Yu.A. Drozd,  Tame and wild matrix problems,
 in: Representations and Quadratic Forms,  Akad. Nauk Ukrain. SSR, Inst. Mat., Kiev, 1979, 39--74.


\bibitem{FKPS}
V. Futorny, T. Klymchuk, A. P. Petravchuk, V. V. Sergeichuk,
 Wildness of the problems of classifying two-dimensional spaces of commuting linear operators and certain Lie algebras, Linear Algebra Appl. {\bf 536} (2018), 201--209.


\bibitem{Gantmacher} 
F.R. Gantmacher, The theory of matrices,
Vol. 1, AMS Chelsea Publishing, 2000.




 \bibitem{GelfPon69} M.Gelfand, A.Ponomarev,  Remarks on the classification of a pair of commuting linear transformations in a finite dimensional space, Funkcinal. Analiz i Prilozhen, {\bf 3} (4), (1969),  81-82. 
 

 \bibitem{Krugljak63}
 S. A. Krugljak,  Representations of the (p, p) group over a field of characteristic p.   Dokl.
Akad. Nauk SSSR. {\bf 153} (1963), 1253--1256.

 \bibitem{Nazarova73}
 L. A. Nazarova,  Representations of quivers of infinite type.
  Izv. Akad. Nauk SSSR Ser. Mat. {\bf 37}(1973), 752--791.

\end{thebibliography}
\end{document}